\algnewcommand\algorithmicinput{\textbf{Input:}}
\algnewcommand\Input{\item[\algorithmicinput]}
\algnewcommand\algorithmicoutput{\textbf{Output:}}
\algnewcommand\Output{\item[\algorithmicoutput]}
\newtheorem{theorem}{Theorem}
\def\BibTeX{{\rm B\kern-.05em{\sc i\kern-.025em b}\kern-.08em
    T\kern-.1667em\lower.7ex\hbox{E}\kern-.125emX}}
\newcommand{\apriori}{\emph{a priori}\xspace}
\newcommand{\revised}[1]{\textcolor{black}{#1}}
\title{\LARGE \bf
Path Planning and Energy Management of Hybrid Air Vehicles for Urban Air Mobility
}
\author{Satyanarayana G. Manyam$^{1}$, David W. Casbeer$^{2}$, Swaroop Darbha$^{3}$, \\ Isaac E. Weintraub$^{2}$ and Krishna Kalyanam$^{4}$
\thanks{$^{1}$Satyanarayana G. Manyam is with the Infoscitex corporation, a DCS Company, Dayton, OH, USA 
        {\tt\small msngupta@gmail.com}}%
\thanks{$^{2}$David W. Casbeer and Isaac E. Weintraub are with the Controls Science Center of Excellence, Air Force Research Laboratory,
        WPAFB, OH, USA
        {\tt\small david.casbeer@us.af.mil, isaac.weintraub.1@us.af.mil }}%
\thanks{$^{3}$Swaroop Darbha is with Texas A \& M University, College Station, TX, USA
        {\tt\small dswaroop@tamu.edu }}%
\thanks{$^{4}$Krishna Kalyanam is with the Aviation Systems Division, NASA Ames Research Center, Moffett Field, CA, USA
        {\tt\small krishna.m.kalyanam@nasa.gov }}
\thanks{Distribution Statement A. Approved for public release, distribution unlimited. Case Number: AFRL-2022-0781.}
}
\begin{document}

\maketitle
\thispagestyle{empty}
\pagestyle{empty}

\begin{abstract}
A novel coupled path planning and energy management problem for a hybrid unmanned air vehicle is considered, where the hybrid vehicle is powered by a dual gas/electric system. Such an aerial robot is envisioned for use in an urban setting where noise restrictions are in place in certain zones necessitating battery only operation. \revised{We consider the discrete version of this problem, where a graph is constructed by sampling the boundaries of the restricted zones, and develop a path planning algorithm.} The planner simultaneously solves the path planing along with the energy mode switching control, under battery constraints and noise restrictions. This is a coupled problem involving discrete decision making to find the path to travel, and determining the state of charge of the battery along the path, which is a continuous variable. A sampling based algorithm to find near optimal solution to this problem is presented. To quantify the efficacy of the solution, an algorithm that computes tight lower bounds is also presented. The algorithms presented are verified using numerical simulations, and the average gap between the feasible solutions (upper bounds) and the lower bounds are, empirically, shown to be within $15\%$ of each other. 
\end{abstract}

\section{Introduction} \label{sec:intro}

In the area of urban air mobility (UAM) and drone delivery, many commercial ventures are considering electric propulsion aircraft \cite{hasan2019urban,patterson2021initial}.
%
Given the deficiencies in state-of-the-art lithium-ion battery energy density and fuel cell technology~\cite{Button2021-ra,Smil_CarbonAlgebra}, it is prudent to consider alternative technologies that can help reduce our carbon footprint in the near term. Furthermore, as the world looks for faster modes of transportation and quicker delivery of goods, our skies will become saturated with the noise from these drones  \cite{rizzi2020urban}. In the most prevalent use cases, these vehicles will operate in locations where such droning background noise is unacceptable. A gasoline-electric hybrid aerial robotic vehicle is well suited for UAM or drone delivery applications, where the gasoline engine provides long endurance and electric motor facilitates the low noise mode \cite{fredericks2013benefits}. \revised{The perceived noise level when the aerial vehicle is powered by electric motor is considerably less compared to a gasoline engine \cite{cabell2016measured, kim2018review}.} 



 
\revised{In this letter, we consider a noise constrained path planning problem for a hybrid gasoline-electric unmanned air vehicle. We assume the robotic vehicle is equipped with a series hybrid architecture \cite{lieh2011design}, where the propellers are powered by an electric motor that draws power from either the gasoline engine-generator or from the battery. The gasoline engine, when run at full capacity (maximum flow rate of fuel), can run the motor and also charge the battery; or the engine can be run at a limited capacity to produce only sufficient power to run the motor. However, due to frictional losses, it is more efficient to run at full capacity and charge the battery to full whenever possible, and then power the motor with the battery. Therefore, we assume that the fuel rate is at maximum whenever the gasoline mode is chosen. We further assume that the architecture facilitates \emph{instant} effortless switching between these two modes. The robotic vehicles considered can make sharp turns similar to quad-rotors, and therefore, we do not consider the kinematic constraints.}


The path planning problem involves finding a path between a pre-specified start and goal locations in the presence of quiet zones. The aerial robot is allowed to pass through  quiet zones, however it must be powered by the electric mode (gasoline-engine turned off) while flying above such zones. A candidate path can be divided into several segments, where each segment could be either gasoline or electric mode. \revised{The cost we have considered in this letter is the fuel cost, and therefore the objective is to minimize the length of the segments that are traveled in gasoline mode. This cost is appropriate for commercial applications where the objective is to minimize the fuel consumption. However, the framework presented here can easily modified for any other application that aims to minimize a different objective, for example, travel time.} The decision making involves finding the path, while simultaneously determining \revised{the switching points from gas to electric and vice-versa.} To do so, the planner must determine the segments along the path where the power source is the gasoline engine or the electric motor, such that state of charge remains within the capacity limits.

To find the optimal paths, one needs to model the battery characteristics, the rate of charge of the battery while the robot travels in gasoline mode and the rate of discharge in electric mode. \revised{For simplicity, in the operational limits of the battery charge level, we assume that the rates of discharge and recharge with respect to distance traveled are constants.} Let $q(s)$ be the variable representing battery charge along a path, where the parameter $s$ represents the length of the path traveled, then
\begin{align}
        &q'(s) = 
        \begin{cases}
                -\alpha &\text{if electric mode},\\
                \beta &\text{if gasoline mode}, \\
        \end{cases} \label{eq:batterymodel}\\
        &q_{\min}\le q(s) \le q_{\max} \label{eq:chargelimits},
\end{align}
where $\alpha$ and $\beta$ are the rates of discharge and recharge per unit distance traveled. This model allows for quick evaluation of feasibility for a given state of charge at initial and final position of a segment of the path. The path planning algorithm we present in this paper can accommodate higher fidelity battery models. 

\emph{Toy Example:} Suppose the initial charge is $80\%$, and final charge level is required to be at least $50 \%$. A feasible path and the corresponding charge profile along the path are shown in Fig. \ref{fig:ex1}, where there exists one quiet zone, shown in grey. The path is demarcated and shown in magenta and green for the gasoline and electric modes, respectively. Note that the charge profiles are linear due to the model assumed in \eqref{eq:batterymodel}. 
In this example, the robot travels using the gasoline mode and switches to electric mode before it enters the quiet zone and continues in the electric mode throughout the quiet zone, and switches to the gasoline mode after exiting from the quiet zone. It is clear that the mode of the segment in the quiet zone is electric, and the other segments could be either gasoline or electric. \emph{The planning algorithm must chose the modes and switching points along a path that minimizes the total fuel cost.} 

Let us represent a path as a series of segments $\{ (v_s, v_1), (v_1, v_2), \ldots (v_n, v_t) \}$. For a given path, one solution approach would be to determine the charge levels $\{q_1, \ldots q_n \}$ at the vertices $\{v_1, \ldots v_n \}$ that are feasible with respect to the battery model, and the state of charge satisfies \eqref{eq:chargelimits} everywhere along the path. However, for the problem considered in this paper, the path itself is a decision variable; this coupling between the path planning and charge profile planning poses a challenge.

\begin{figure}
        \centering
        \vspace{8pt}
        \includegraphics[width=.75\columnwidth]{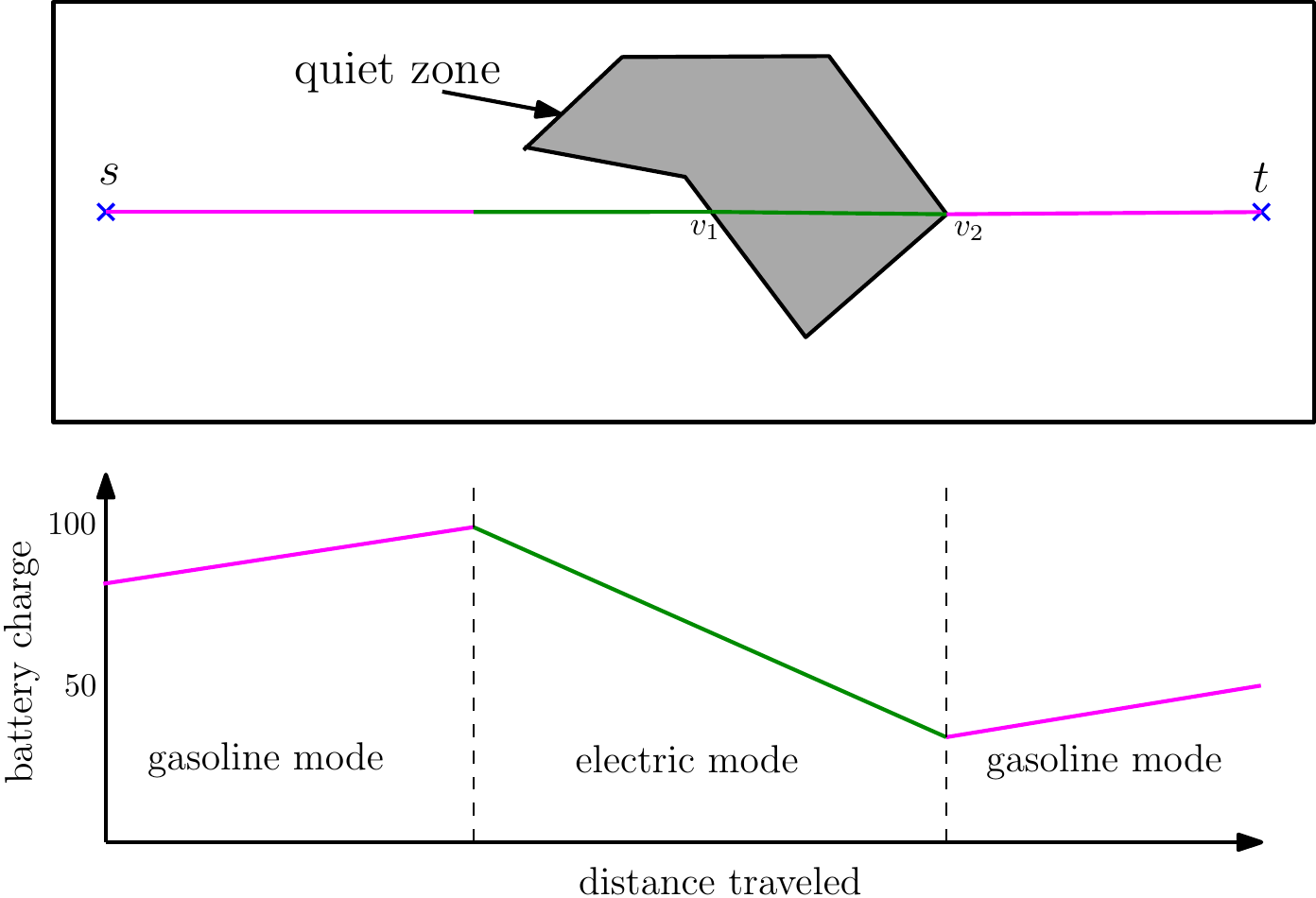} 
        \caption{An example of a feasible path for hybrid navigation}
        \label{fig:ex1}
\vspace{-15pt}
\end{figure}

On its own, path planning in the presence of \revised{quiet zones} resides in a continuous space resulting in an infinite dimensional problem. Sampling the continuous space is a popular technique used to translate the problem to a discrete planning problem. We discretize the problem by sampling the boundaries of the \revised{quiet zones}, and generate a graph similar to a visibility graph \cite{deBerg2000}, shown in Fig. \ref{fig:smplGraph}. \revised{We address the discrete version of the problem by formulating the path planning and the energy management problem on the discrete graph.} It allows us to reduce the infinite dimensional path planning problem into a finite dimensional discrete problem. There is a loss of optimality using this approach, however, it yields a simplified and potentially tractable approach to solve the coupled infinite dimensional problem. Note that, the loss of optimality depends on the sampling intervals along the boundaries, and therefore the loss could be made sufficiently small with sufficiently large sampling rate. We also develop an algorithm that produces tight lower bounds to this problem, and thereby corroborates the quality of the feasible solutions. \revised{{\it The key idea to obtain this lower bound is to partition the domain of a set of continuous variables, that determine the state of charge at each vertex, into a set of sub-intervals. The state of charge at these vertices is allowed to be discontinuous, \emph{i.e.} the vehicle can arrive at and exit a vertex with different charge levels; but the two charge levels are forced to lie in one of the sub-intervals. This problem is relaxed compared to the original because we are allowing it to violate the continuity of the state of charge at the vertices.} } 

\emph{Related literature:} \revised{The path planning problems are solved using sampling based algorithms such as RRT \cite{lavallerrt}, RRT$^*$~\cite{karaman2011sampling}, BIT$^*$~\cite{gammell2020batch}, road-map based search techniques such as PRM methods \cite{kavraki1996probabilistic}, incremental graph search methods such as D$^*$ Lite \cite{koenigDslite}. Other path planning techniques include visibility graph, Voronoi diagram based or potential field methods \cite{surveypathplan}.} There exists few results in the area of energy aware path planning. In~\cite{lin2018}, a planning problem is considered where a robot needs to accomplish a set of goals while maintaining a minimum energy threshold. In \cite{Franco2015}, an energy aware coverage planning problem is addressed, where a robot needs to cover an area that minimizes the total energy consumption. A planning problem to minimize energy consumption in the presence of disturbances is addressed in~\cite{bezzo2016}. The paper uses a model predictive approach to estimate safety critical states, and presents a self triggering schedule to re-plan, and is compared to periodic re-planning. Zhang et al. addressed a route planning for a plug-in hybrid vehicle is addresses in \cite{Zhang2020} that minimizes energy consumption. The authors addressed the coupled routing problem that aims to simultaneously optimize the decision making for path and the power management. 

The problem considered in this paper addresses the path planning problem for a hybrid aerial robot, in the presence of (noise-)restricted zones. Here, we aim to optimize the travel cost for a robotic vehicle that can switch between two different modes. To the best of our knowledge, a path planning that allows a robot to switch between travel modes, with constraints on the modes in certain regions, has not addressed before in the literature. The application of this novel problem to urban air mobility and drone delivery is of particular relevance and deserves attention. \revised{In  sampling based and incremental search techniques, a tree is iteratively constructed by following a sampling procedure. To determine the switch points along the path while simultaneously growing the tree is not possible without decoupling the path planning and energy management problems. The novelty of the proposed algorithm lies in addressing the coupled problem that involves path planning and energy management. Moreover, the presented technique facilitates computation of the lower bound to the optimal solution, which ratifies the quality of the feasible solutions produced.}

The main contributions of this work are: $(i)$ we present a novel path planning and energy management problem for a hybrid robot suited for an urban air mobility application, $(ii)$ we formulate the coupled problem on the discrete graph that involves discrete decision making for the path, and continuous variables for the state of charge along the path, $(iii)$ we present a sampling based approach to find near optimal solutions, $(iv)$ we develop a partitioning algorithm to find tight lower bounds to the optimal solution, $(v)$ we validate the presented algorithms using numerical experiments on some benchmark areas of operation.

The rest of the paper is organized as follows. In Section~\ref{sec:prblmform}, a graph is constructed by sampling the boundaries of the \revised{quiet zones}, and the hybrid path planning is defined on this graph. The algorithms to compute near optimal solutions and tight lower bounds using sampling and partitioning approach, respectively, are presented in Section~\ref{sec:approach}. The algorithms presented were tested using computational experiments, the results are presented in Section~\ref{sec:results}, and some concluding remarks are provided in Section~\ref{sec:concl}.

\section{Problem Formulation} \label{sec:prblmform}

Before formalizing the problem, we define the graph, $G_s$ using Algorithm \ref{alg:smplGraph}, which takes as inputs the start and goal locations, $v_s$ and $v_t$, a set of \revised{quiet zones}, $\mathcal{O}$, and a sampling interval, $\delta L$. \revised{The boundaries of each quiet-zone are sampled uniformly by choosing a point at every $\delta L$ units of distance. The vertex set, $V_s$, is built by creating a vertex corresponding to these samples, the start and goal positions (steps \ref{alg:Vs1}--\ref{alg:Vs2} of Algorithm \ref{alg:smplGraph}). An edge is added if the two vertices are visible\footnote{Two vertices are considered visible if the straight line connecting them does not intersect with any other quiet zones.}; note that, the edges between the vertices that belong to the same quiet zone are added too (step \ref{alg:edgFeas1}). The edge set consists of edges both inside and outside the quiet zones.} The sampled graph of the example problem is shown in Fig. \ref{fig:smplGraph}. The dotted lines inside the quiet zone represent the feasible segments of the path, where the mode of travel is restricted to electric. \revised{Though we consider only the polygonal quiet zones in the examples, this method directly applies to non-polygonal quiet zones too. For non-convex shapes, one may consider convex-hull as done in \cite{huang2017viable}, and generate the graph. If one needs to consider obstacles along with quiet zones, it is sufficient to add the vertices of the obstacles to $V_s$, and corresponding edges that does not intersect with the obstacles.}

%


\begin{figure}[htpb]
        \vspace{8pt}
        \centering
        \includegraphics[width=.90\columnwidth]{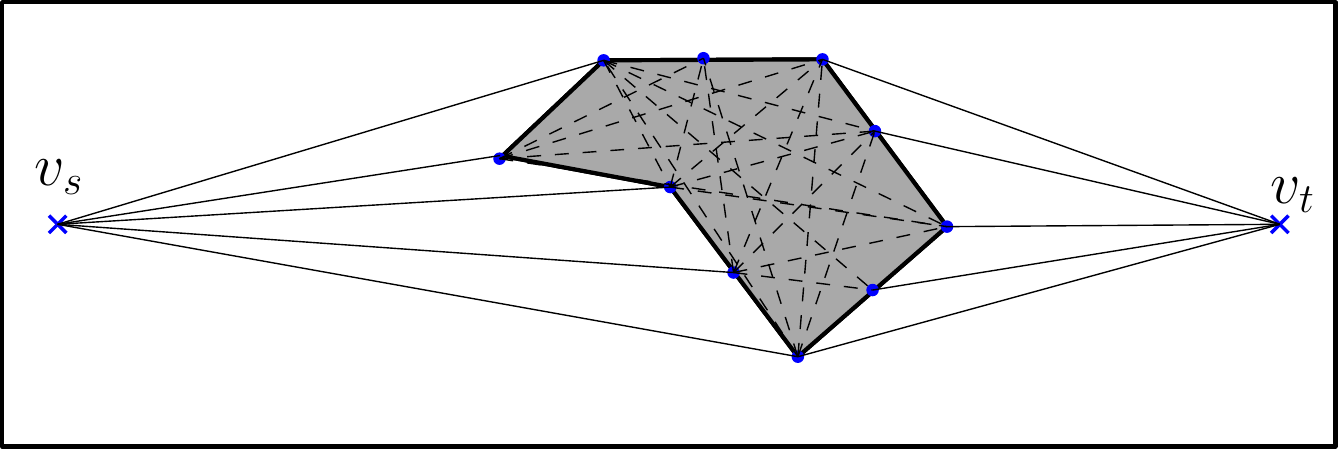} 
        \caption{A graph generated by sampling the boundaries of the \revised{quiet zones}.}
        \label{fig:smplGraph}
        \vspace{-10pt}
\end{figure}

We formulate and solve the path planning and energy management problem using the sampled graph, $G_s(V_s, E_s)$. With this approximation, the path planning reduces to finding an ordered sequence of nodes $S_p :=\{v_s, v_{s_1}, \ldots v_{s_k}, v_t \}$ on graph $G_s$ along with the state of charge of the battery at each of these nodes, $\{q_s, q_{s_1}, \ldots, q_{s_k}, q_t  \}$. Let $v_l$ indicate the $l^{th}$ node in $S_p$.  The following must hold for $S_p$ and the corresponding battery charge at each node $v_l\in S_p$: $(i)$ the state of charges, $q_{s_l}, q_{s_{l+1}}$ on every edge $(v_l,v_{l+1}) \in S_p$ satisfies the battery dynamics \eqref{eq:batterymodel} and \eqref{eq:chargelimits}, $(ii)$ the segments of the path in the quiet zones are in electric mode, \revised{($iii$) the state of charge of the battery at the terminal node is greater than a specified value, $q_{goal}$,} and $(iv)$ the total cost of travel is minimized. The cost we aim to minimize is the total fuel consumed while traveling in gasoline mode. Therefore, each edge has a zero cost if it is traveled completely in electric mode. 

\begin{algorithm}
        \caption{Construction of the sampling graph \label{alg:smplGraph} }
        \begin{algorithmic}[1]
        
                \Function{SamplingGraph}{$v_s, v_t, \mathcal{O}, \delta L$}
                \State $V_s \gets \textproc{InitiateNodeSet()}$
                \State $E_s \gets \textproc{InitiateEdgeSet()}$            
                \State $V_s \gets V_s \cup \{v_s, v_t \} $          \label{alg:Vs1}   
                \For {$O_i \in \mathcal{O}$}                        
                        \State $V_s \gets V_s \cup \textproc{DiscreteSampling}(O_i, \delta L)$    \label{alg:Vs2}                                            
                \EndFor	
                \For {$v_i, v_j \in V_s$}
                \If {$\textproc{CheckEdgeFeasibility}(v_i, v_j)$} \label{alg:edgFeas1}
                        \State $E_s \gets E_s \cup (v_i, v_j)$                                                 
                \EndIf
                \EndFor
                \State $G_s \gets \textproc{CreateGraph}(V_s, E_s)$
                \State \Return{$G_s$}
                \EndFunction
        \end{algorithmic}
\end{algorithm}

Let $I:=\{1, \ldots, |V_s| \}$ be the set of indices of the vertices in $V_s$. \revised{For any $i,j\in I$, let $x_{ij}$ denote the binary variable such that $x_{ij}=1$ if an edge $(v_i,v_j)$ is chosen to be on the path, and $x_{ij}=0$, otherwise. Given the states of charge of the battery $q_i$ and $q_j$ at the vertices $v_i$ and $v_j$, respectively, let $c_{ij}(q_i,q_j)$ be the cost of travel from $v_i$ to $v_j$.} Let $\mathbf{x}$ represent a matrix of all binary variables, and $\mathcal{X}$ be the set of all feasible paths from $v_s$ to $v_t$ in $G_s$. The optimization problem, $\mathcal{P}_1$, is stated as the following.
%
%
\begin{align} \label{prblm:p1}
        \mathcal{P}_1: \min_{\mathbf{x} \in \mathcal{X}, q_k \in [q_{\min}, q_{\max}], \forall k \in I } \sum_{i,j \in I} x_{ij}c_{ij}(q_i, q_j)
\end{align}
In the following sections, we present the algorithms that simultaneously optimizes the two sets of variables, $\mathbf{x}$ and $\{ q_i, i \in V_s \}$. The approach involves sampling of the state of charge at every vertex in $V_s$, constructing a graph $G_u(V_u, E_u)$, and solving a shortest path problem on this graph; the solution to the shortest path problem on $G_u$ produces a feasible solution (upper bound to optimal solution) to $\mathcal{P}_1$. \revised{For a given map with quiet zones, the construction of the base graph without the start and goal can be done offline. We only need to add the start and goal vertices and corresponding edges to compute the shortest path, which significantly reduces the online computation time.} We also present a partitioning approach, similar to the upper bounding algorithm, that produces tight lower bounds. In this way, we provide both upper and lower bounds to the optimal solution and hence the gap between the two is the maximum gap between the optimal and feasible (upper bound) solutions.

\section{Technical Approach} \label{sec:approach}
The algorithms to compute the upper bounds and lower bounds follow a  method similar to Algorithm \ref{alg:smplGraph} of creating nodes and edges. These algorithms \emph{sample} and \emph{partition} the state of charge at each vertex in $V_s$. In the upper bounding algorithm, the state of charge at each vertex is uniformly sampled, creating a new node for each sample of state of charge at each vertex. In contrast, the lower bounding algorithm partitions the feasible interval of state of charge into small sub-intervals and builds a graph where each node represents a sub-interval of the state-of-charge at each vertex in $V_s$. 
This partitioning approach is similar to that found in \cite{manyam2017tightly,manyam2018tightly,rathinam2019near,vana2015dubins}. For coupled optimization problems involving both discrete and continuous variables, this approach is found to produce tight lower and upper bounds, and therefore guarantees the quality of the upper bounds with respect to the optimal solution. 

\subsection{Feasible Solution (Upper bounds)}

In problem $\mathcal{P}_1$, at any vertex $v_k \in V_s \setminus \{v_s,v_t \}$, the charge $q_k$ is a continuous variable and $q_k \in [q_{min}, q_{max}]$. To compute near optimal feasible solutions, for every $v_k \in V_s \setminus \{v_s, v_t \}$, we chose a discrete set of values, $Q^k_u$, sampled uniformly in the interval $[q_{min}, q_{max}]$.  Such a sampling procedure transforms $\mathcal{P}_1$ into the discretized problem $\mathcal{P}_2$ below:
\begin{align}
        \mathcal{P}_2: \min_{\mathbf{x} \in \mathcal{X}, q_k \in Q^k_u \forall k \in I} \sum_{i,j \in I} x_{ij}c_{ij}(q_i, q_j).
\end{align}
By construction, any solution to $\mathcal{P}_2$ is a feasible solution to $\mathcal{P}_1$.
To solve $\mathcal{P}_2$, we construct a graph $G_u(V_u, E_u)$, where the set of nodes, $V_u$, consists of all nodes corresponding to every combination of $v_k \in V_s \setminus \{v_s,v_t \}$ and $q_k \in Q^k_u$. One can choose the discrete set $Q^k_u$ to be the same for every node $v_k$, and let it be $Q_u$. For the start node, there is only one value $(v_s,q_{init})$, and the feasible charges for the goal node is sampled from the set  $[q_{goal}, q_{max}]$; denote this set of charges as $Q_{goal}$. Then $V_u$ contains the Cartesian product of the sets $V_s \setminus \{v_s,v_t \}$ and $Q_u$, and the nodes corresponding to the start and goal nodes, that is $V_u := \{ V_s \setminus \{ v_s,v_t\} \times Q_u \} \cup \{(v_s,q_{init})\} \cup \{ \{v_t\} \times Q_{goal} \}$.  An illustration of the graphs $G_s$ and $G_u$ is shown in Fig. \ref{fig:graphsmpl} and \ref{fig:graphUB}. 


For any nodes, $v_l \in V_u$, let the position corresponding to the node $v_l$ be $p_l$ and state of charge of the battery, $q_l$. We check if a feasible edge exists between a pair of nodes that complies with the battery dynamics \eqref{eq:batterymodel}, and compute the cost of such edge if it exists. For a given pair of nodes $v_{u_i}, v_{u_j} \in V_u$, where $v_{u_i} = (v_i, q_i), v_{u_j}=(v_j, q_j)$, Algorithm \ref{alg:evalEdge} checks if edge $(v_{u_i}, v_{u_j})$ is feasible with respect to the battery dynamics. The algorithm returns the cost of the corresponding edge as $zero$ if the travel mode is completely electric. \revised{Notice that the gasoline engine runs at full capacity whenever it is turned on, and therefore it is sufficient to find the length of the segments that are traveled in gasoline mode to evaluate the cost. If an edge is traveled in both gasoline and electric modes, the algorithm computes the distance traveled in gasoline mode, $\lambda d_{ij}$, and returns the fuel cost of travel, $c_f \lambda d_{ij}$, where $c_f$ is cost of fuel per unit distance traveled.}

\begin{algorithm}
        \caption{Evaluation of an edge \label{alg:evalEdge} }
        \begin{algorithmic}[1]
                \Function{SOCFeasibility}{$v_{u_i}, v_{u_j}$}
                \State $d_{ij} = |position(v_{u_i})-position(v_{u_j})|$
                \If{ $q_j> \min (q_{max}, q_i + \beta d_{ij}) $} 
                        \State FeasCheck $\gets$ false \label{alg:evalEdgAllGas2}                               
                \ElsIf{ $ q_i - \alpha d_{ij} > q_{min} \And q_j \le q_i-\alpha d_{ij}$} \label{alg:evalEdgAllElec}
                        \State FeasCheck $\gets$ true   \Comment{electric mode}
                        \State $\lambda \gets 0$
                \Else      \Comment{partly gasoline/electric}  
                    \State $\lambda_1 \gets (q_{max}-q_i)/(\beta*d_{ij})$ \label{alg:compLambda1} 
                    \State $\lambda_2 \gets (q_{max}-q_j)/(\alpha*d_{ij})$
                    \If{$ \lambda_1, \lambda_2 \ge 0 \And \lambda_1+\lambda_2 \le 1$}
                        
                        \State $\lambda \gets \lambda_1 + (\frac{\alpha}{\alpha+\beta})*(1-\lambda_1-\lambda_2)$ 
                    \Else
                        \State $\lambda \gets \frac{q_j-q_i+\alpha d_{ij}}{(\alpha + \beta)d_{ij}}$ \label{alg:compLambda2} 
                    \EndIf
                    \If{$0 \le \lambda \le 1$}  
                        \State FeasCheck $\gets$ true   
                    \EndIf 
                \EndIf
                \If{FeasCheck}
                \State $cost \gets c_f \lambda  d_{ij}$ \Comment{fuel cost} \label{alg:evalEdgCost}
                \EndIf
                
                \Return FeasCheck, $cost$
                \EndFunction
        \end{algorithmic}
\end{algorithm}

Algorithm \ref{alg:evalEdge} checks if ``all electric" mode is feasible  in step \ref{alg:evalEdgAllElec}. The variable $\lambda$ indicates how much of the edge is traveled in gasoline mode. When an edge is traveled in partly gasoline and partly electric mode, without loss of generality, we assume that the robot travels in gasoline mode first and then switches to electric. \revised{Further, we allow a maximum of three switch points on an edge. When an edge is sufficiently long, more than three switch points might be necessary; however, such cases can be accommodated by breaking the long edge into smaller edges by introducing artificial vertices. If an edge is traveled using both modes, the value of $\lambda$ is computed in steps \ref{alg:compLambda1}--\ref{alg:compLambda2}, and the corresponding cost is computed in step \ref{alg:evalEdgCost}. }

To solve the problem $\mathcal{P}_2$, we construct graph $G_u(V_u, E_u)$, such that a shortest path on this graph produces a solution to the problem $\mathcal{P}_1$. The pseudocode of the algorithm that solves $\mathcal{P}_2$ is presented in Algorithm  \ref{alg:upperbound}. The problem prescribes a state of charge of the robot at the start, and therefore we can create a node, $v_{u_s}$, correspondingly, and add to $V_u$, shown in step \ref{alg:createnode_s}. In step \ref{alg:discsmpl1}, the discrete set, $Q_u$, is obtained by sampling the interval $[q_{\min}, q_{max}]$. In steps \ref{alg:createnode}--\ref{alg:createnode1}, for every combination of $(v_i, q_j), v_i \in V_s \setminus \{v_s, v_t\}, q_j \in Q_u$, we create a node and add to $V_u$. A minimum state of charge, $q_{goal}$, is required at the goal position, and to satisfy that, we sample uniformly in the interval $[q_{goal}, q_{max}]$, and create the set of nodes, $V_{goal}$ (shown in step \ref{alg:createnode2}), that correspond to the goal position and a state of charge $q_j \in Q_{goal}$.  In steps \ref{alg:createedge1}--\ref{alg:createedge4}, we construct the set of edges, $E_u$, by adding an edge for every pair of nodes in $V_u$, if Algorithm \ref{alg:evalEdge} returns a feasible solution; the costs returned by the algorithm are set as the weights of those edges. Note that there could be multiple nodes in $V_{goal}$, that correspond to the goal position, and satisfy the minimum charge required at goal. Therefore, any path from $v_{u_s}$ to a node in $V_{goal}$ is a feasible path. To find the minimum cost path, we add an another node $v_{u_t}$, that corresponds to the goal, and add zero cost edges between all nodes in $V_{goal}$ and $v_{u_t}$, shown in steps \ref{alg:createnode_t}--\ref{alg:createedges_t}. \revised{Finally, we use Dijkstra's algorithm to find the optimal shortest path from $v_{u_s}$ to $v_{u_t}$ in step \ref{alg:spGu}. The time complexity of Dijkstra's algorithm is $\mathcal{O}(|V_u|^2)$, and it returns the minimum cost path on the graph $G_u$. The trajectory for the robot is constructed using the position of the vertices in the shortest path.}

\begin{algorithm}
        \caption{Construction of the graph, $G_u$ \label{alg:upperbound} }
        \begin{algorithmic}[1]
                \Function{SamplingGraphG}{$G_s, \delta q$}
                \State $V_u \gets \textproc{InitiateNodeSet()}$
                \State $E_u \gets \textproc{InitiateEdgeSet()}$            
                \State $v_{u_s} \gets \textproc{CreateNode}(v_s, q_{init})$
                \State $v_{u_t} \gets \textproc{CreateNode}(v_t)$
                \State $V_u \gets V_u \cup \{v_{u_s}\} $    \label{alg:createnode_s} 
                \State $Q_u \gets \textproc{DiscreteSampling}([q_{min}, q_{max}], \delta q )$   \label{alg:discsmpl1}
                \State $Q_{goal} \gets \textproc{DiscreteSampling}([q_{goal}, q_{max}], \delta q )$   \label{alg:discsmpl2}

                \For {$v_i \in V_s$, $q_j \in Q_u$}       \label{alg:createnode}                 
                        \State $V_u \gets V_u \cup \textproc{CreateNode}(v_i, q_j)$   \label{alg:createnode1}                                             
                \EndFor	 
                
                \For {$q_j \in Q_{goal}$}                        
                        \State $V_{goal} \gets V_{goal} \cup \textproc{CreateNode}(v_t, q_j)$      \label{alg:createnode2}                                          
                \EndFor	
                \State $V_u \gets V_u \cup V_{goal} $                                                
                
                \For {$v_{u_i}, v_{u_j} \in V_u$} \label{alg:createedge1}
                
                \If {$\textproc{CheckEdgeFeasibility}(v_{u_i}, v_{u_j}) $}
                        \State $feas, cost \gets \textproc{SOCFeasibility}(v_{u_i}, v_{u_j})$ \label{alg:edgFeas}
                        \If{feas}
                                \State $E_u \gets E_u \cup (v_{u_i}, v_{u_j})$  
                                \State $c_{ij} \gets$ cost  \label{alg:createedge4}
                        \EndIf
                \EndIf                
                \EndFor
                \State $V_u \gets V_u \cup \{ v_{u_t}\} $      \label{alg:createnode_t}                                           

                \For {$v_{u_k} \in V_{goal}$}
                        \State $E_u \gets E_u \cup (v_{u_k}, v_{u_t})$  \label{alg:createedges_t}   
                \EndFor

                \State $G_u \gets \textproc{CreateGraph}(V_u, E_u)$
                \State $path_u \gets \textproc{ShortestPath}(G_u, v_{u_s}, v_{u_t})$ \label{alg:spGu}
                \State \Return $path_u$
                \EndFunction
        \end{algorithmic}
\end{algorithm}

The algorithm returns the shortest path, $path_u$, which is comprised of a series of nodes $\{v_{s_1}, \ldots v_{s_p}\}$, and each of these nodes correspond to the vertices in $V_u$. This series of vertices in $V_u$ is the path taken by the robot. The states of charge corresponding to each of the nodes in $path_u$ are the charges at corresponding vertices in $V_u$. The feasibility check in step \ref{alg:edgFeas} of Algorithm \ref{alg:upperbound} ensures the feasibility of the whole path. Since, the weight of each edge is the cost of travel in gasoline mode, the shortest path is the path of minimum fuel cost. The optimal solution of $\mathcal{P}_2$ is a feasible solution to $\mathcal{P}_1$, but may not be optimal due to the discrete sampling. A tight lower bound could corroborate the quality of a feasible solution, and an algorithm to compute tight lower bounds is presented in the next section.
\begin{figure}[htpb]
        \centering 
        \subfigure[Graph constructed by sampling the boundaries of the \revised{quiet zones}]{\includegraphics[width=3in]{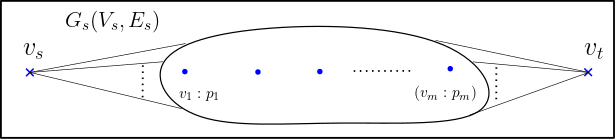} \label{fig:graphsmpl}} \\         
        \subfigure[$G_u(V_u, E_u)$ obtained by sampling charge at every $v_i \in V_s$]{\includegraphics[width=3in]{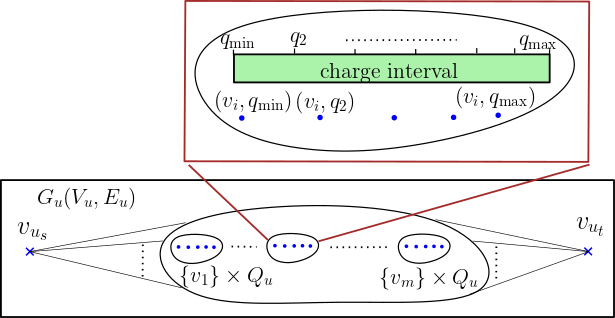} \label{fig:graphUB}} \\        
        \subfigure[Graph $G_l(V_l, E_l)$ obtained by partitioning charge into sub-intervals]{\includegraphics[width=3in]{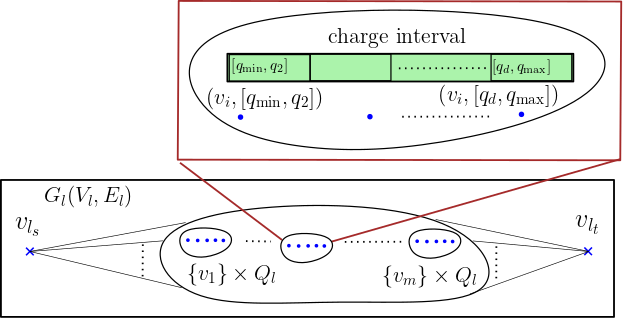} \label{fig:graphLB}}
        \caption{Graph construction to compute the upper bounds and lower bounds to $\mathcal{P}_1$}
        \vspace{-15pt}
\end{figure}

\subsection{Lower Bounds}
To compute lower bounds, it is a common practice to relax a set of constraints and the optimal solution of the resulting relaxed problem gives a lower bound to the original problem. The Held-Karp bounds for traveling salesman problem \cite{held1970traveling},  is a good example of this technique. In recent work, for coupled problems involving discrete and continuous decision variables, a technique was developed where a set of constraints are relaxed, and another set of `loose' constraints are added, such that it produces tight lower bounds. This was shown to produce very tight lower bounds for routing problem with turn radius constraints \cite{rathinam2019near, manyam2018tightly} and for neighborhood traveling salesman problem \cite{vana2015dubins}. The problem in this paper is also a coupled problem; and we develop an algorithm using a similar idea of partitioning the continuous decision variables, that produces tight lower bounds.

To compute the lower bound, we pose a relaxation of the problem $\mathcal{P}_1$. Any feasible solution to $\mathcal{P}_1$ would be given as a sequence of nodes $\{v_1, \ldots v_p \}$, and a state of charge at each of these nodes, $\{q_1, \ldots q_p \}$. The continuity of the state of charge dictates that at any intermediate node $v_j$ along the path, the state of charge at the end of prior edge is same as the state of charge at the beginning of the following edge. For example, if a feasible solution contains two successive edges, $(v_i, v_j)$ and $(v_j, v_k)$, let $q^e_{ij}$ be the charge at the end of edge $(v_i, v_j)$, and $q^s_{jk}$ be the charge at the start of edge $v_j v_k$. \revised{The position of the end of the edge  $(v_i, v_j)$ is same as the start of the edge $(v_j, v_k)$; therefore, the continuity of the charge profile dictates that  $q^e_{ij}=q^s_{jk}$.} We relax this continuity constraint and allow $q^e_{ij}$ and $q^s_{jk}$ to be different but we restrict them to lie in an interval, i.e., $q^e_{ij}, q^s_{jk} \in (q_p, q_{p+1})$. 
We refer to this relaxed problem as $\mathcal{P}_3$. Since, this is a relaxation to $\mathcal{P}_1$, every feasible solution to $\mathcal{P}_1$ is also feasible to the relaxed problem $\mathcal{P}_3$. Therefore, the optimal solution of $\mathcal{P}_3$ is a lower bound to the optimal solution of $\mathcal{P}_1$. 

To this end, at every node, we partition the feasible \emph{interval} of state of charge into $n_l$ \emph{sub-intervals}. At the vertices $v_i \in V_s \setminus \{v_s,v_t \}$, the set of intervals would be $Q_l^i=\{ [q_{\min}, q_1], [q_1, q_2 ], \ldots [q_{n_l-1}, q_{\max} ] \}$. At the goal node, the minimum charge required is $q_{goal}$, and therefore the intervals would be $\{ [q_{goal}, q_1], \ldots [q_{n_g-1}, q_{\max} ] \}$, for some $n_g$. Let $\bar{q}_i$ represent an interval of states of charge at a node $v_i$. Now the relaxed problem $\mathcal{P}_3$ is stated as follows:
\begin{align}
        \mathcal{P}_3: \min_{\mathbf{x} \in \mathcal{X}, \bar{q}_k \in Q^k_l, \forall k \in I} \sum_{i,j \in I} x_{ij}c_{ij}(\bar{q}_i, \bar{q}_j).
\end{align}
We solve $\mathcal{P}_3$ by constructing a graph, $G_l(V_l, E_l)$, similar to the $G_u(V_u, E_u)$. The nodes in $V_l$ are the combination of the vertices $v_i \in V_s$ and the intervals $\bar{q}_j \in Q^j_l$. For a pair of nodes $v_{l_i}, v_{l_j} \in V_l$, let $(q^i_k, q^i_{k+1})$ and $(q^j_m, q^j_{m+1})$ be the corresponding charge intervals. The cost of the corresponding edge in $E_l$ is the minimum cost of the edge,
\begin{equation}
        \min_{q_i \in (q^i_k, q^i_{k+1}), q_j \in (q^j_m, q^j_{m+1})} c_{ij}(q_i, q_j).   \label{eq:edgCostLB}
\end{equation}
Due to the linear battery dynamics, this cost could be found by using Algorithm \ref{alg:evalEdge} with the upper limit of the interval at the first node, and lower limit of the interval at the second node, $q^i_{k+1}, q^j_m$. The graph construction of $G_l(V_l, E_l)$ is similar to the one presented in Algorithm \ref{alg:upperbound}, however it differs only in two aspects: $(i)$ in steps \ref{alg:discsmpl1}--\ref{alg:discsmpl2}, the discrete sampling is replaced with continuous partition of the interval $[q_{min}, q_{max}]$ and $[q_{goal}, q_{max}]$, respectively, and $(ii)$ the edge cost, in step \ref{alg:edgFeas}, are assigned using the solution of \eqref{eq:edgCostLB}. Similar to the steps \ref{alg:createnode1}, \ref{alg:createnode2} of Algorithm \ref{alg:upperbound}, nodes are created corresponding to a combination of vertices $v_i \in V_s$ and intervals $\bar{q}_j \in Q^i_l$. The rest of the graph construction goes similar, and therefore, to avoid the repetition, we do not present the pseudocode for the lower bounding algorithm. An illustration of the construction of the graphs $G_u$ and $G_l$ is shown in Figs. \ref{fig:graphUB} - \ref{fig:graphLB}. In the following theorem, we formally prove that the optimal solution to $\mathcal{P}_3$ is a lower bound to the optimal solution of $\mathcal{P}_1$.

\begin{theorem}
The optimal solution of $\mathcal{P}_3$ is a lower bound to the optimal solution of $\mathcal{P}_1$.
\end{theorem}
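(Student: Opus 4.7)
The plan is to prove the theorem by a standard relaxation argument: I will exhibit an injection from feasible solutions of $\mathcal{P}_1$ into feasible solutions of $\mathcal{P}_3$ that is cost-nonincreasing. Since this implies $\text{opt}(\mathcal{P}_3) \le \text{opt}(\mathcal{P}_1)$, the theorem follows.

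First I would take an arbitrary feasible solution of $\mathcal{P}_1$, given by a path $S_p = \{v_s, v_{s_1}, \ldots, v_{s_k}, v_t\}$ and a charge value $q_{s_l} \in [q_{\min}, q_{\max}]$ at each node $v_{s_l}$ (with $q_{s_k} \ge q_{goal}$ at the goal). For every vertex $v_{s_l}$ in the path, I would locate the unique sub-interval $\bar{q}_{s_l} \in Q_l^{s_l}$ that contains $q_{s_l}$ (breaking ties at partition endpoints arbitrarily). This produces a candidate solution to $\mathcal{P}_3$ using the same selection of binary variables $\mathbf{x}$.

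Next I would verify feasibility of this candidate in $\mathcal{P}_3$. Because the continuity of the state of charge at intermediate vertices is relaxed in $\mathcal{P}_3$ to the weaker requirement that the exit charge $q^e_{ij}$ and entry charge $q^s_{jk}$ at $v_j$ lie in a common sub-interval, setting $q^e_{ij} = q^s_{jk} = q_j$ (the actual common value from the $\mathcal{P}_1$ solution) trivially satisfies this: both lie in $\bar{q}_j$ by construction. The per-edge battery dynamics~\eqref{eq:batterymodel} and bounds~\eqref{eq:chargelimits}, and the electric-only requirement in quiet zones, are inherited unchanged from the $\mathcal{P}_1$ solution. So the candidate is feasible in $\mathcal{P}_3$.

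Finally, I would compare costs. On every chosen edge $(v_i,v_j)$, the $\mathcal{P}_1$ solution incurs the cost $c_{ij}(q_i,q_j)$ at the specific charges, while the corresponding edge cost in $\mathcal{P}_3$ is defined by the minimization in~\eqref{eq:edgCostLB} over all $(q_i',q_j') \in \bar{q}_i \times \bar{q}_j$. Since $(q_i,q_j)$ itself lies in this rectangle, the minimum is at most $c_{ij}(q_i,q_j)$. Summing over the edges of the path,
\begin{equation}
\sum_{i,j \in I} x_{ij}\,c_{ij}(\bar{q}_i,\bar{q}_j) \;\le\; \sum_{i,j \in I} x_{ij}\,c_{ij}(q_i,q_j),
\end{equation}
so the $\mathcal{P}_3$-value of the mapped solution is no larger than the $\mathcal{P}_1$-value of the original. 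Taking the infimum over all feasible $\mathcal{P}_1$ solutions on the left-hand side (and noting the $\mathcal{P}_3$-optimum is no larger than any such mapped value) yields $\text{opt}(\mathcal{P}_3) \le \text{opt}(\mathcal{P}_1)$. The only potentially delicate step is the feasibility verification with respect to the partition assignment at the start and goal nodes, which I would handle by observing that $q_{init}$ and the goal charge $q_{s_k} \ge q_{goal}$ each lie in well-defined sub-intervals of $Q_l^{s}$ and $Q_l^{t}$ respectively; this is routine since the partitions cover $[q_{\min},q_{\max}]$ and $[q_{goal},q_{\max}]$.
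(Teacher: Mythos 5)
Your proposal is correct and follows essentially the same argument as the paper: map each feasible $\mathcal{P}_1$ solution to a $\mathcal{P}_3$ solution by placing each charge $q_k$ into its containing sub-interval $\bar{q}_k$, then use the minimization in \eqref{eq:edgCostLB} to conclude the mapped solution's cost is no larger. Your added remarks on feasibility at the start/goal nodes and on tie-breaking at partition endpoints are minor elaborations of the same reasoning.
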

\begin{proof}
        It is sufficient to show that every feasible solution to $\mathcal{P}_1$ is also a feasible solution to $\mathcal{P}_3$, and the cost of the solution to $\mathcal{P}_3$ is less than or equal to the cost of the solution to $\mathcal{P}_1$.
        Let $V_{feas}: = \{v_s, v_{s_1}, \ldots v_{s_p}, v_t \}$ be the sequence of vertices in a feasible solution to $\mathcal{P}_1$, and $Q_{feas} := \{q_s, q_{s_1}, \ldots q_{s_p}, q_t\}$ be the corresponding states of charge at those vertices. For each vertex $v_k \in V_{feas}$ and the corresponding state of charge $q_k$, there exists an interval $\bar{q}_k \in Q^{k}_l$, such that $q_k \in \bar{q}_k$. Construct a path in $G_l$ by identifying the nodes corresponding to $v_k$ and $\bar{q}_k$. This gives a feasible path from $v_{l_s}$ to $v_{l_t}$ in $G_l$, and let $V^l_{feas}$ be the sequence of nodes. The cost of each edge between successive nodes in $V^l_{feas}$ is less than or equal to the cost of corresponding edge in $V_{feas}$ due to \eqref{eq:edgCostLB}, and thus cost of the path in $V^l_{feas}$ is less than or equal to the one in $V_{feas}$.
\end{proof}

\section{Computational Results} \label{sec:results}

To evaluate their performance, we have tested the algorithms to compute the upper bounds and the lower bounds using several scenarios constructed from randomly generated maps and benchmark maps. \revised{Since the problem of \emph{path planning in the presence of obstacles} is closely related to the problem we consider, we use previously established benchmark maps~\cite{sturtevant2012benchmarks} to test our methods.} 
We constructed the maps by randomly generating polygonal restricted zones, where the centers of the polygons are sampled from an uniform distribution. The number of sides are also randomly generated for each restricted zone. There are $10$, $15$, $20$ and $25$ restricted zones in $map1$, $map2$, $map3$ and $map4$, respectively. We have constructed two more maps using the benchmark instances `boston2' and `newyork0' from \cite{sturtevant2012benchmarks}. These are based on real world maps of a region in the cities Boston and New York; we identified the regions directly above the buildings as restricted zones, and they are appropriate for drone delivery applications as discussed in Section \ref{sec:intro}. An interested reader can access the Julia code to extract the buildings from the OpenStreetMaps data from \href{https://github.com/manyamgupta/HybridPathPlanning.git}{https://github.com/manyamgupta/HybridPathPlanning.git}. 

We have generated $50$ scenarios with each of the above maps, where the start and goal positions are chosen from a random distribution such that the straight line distance between them is greater than a specified limit. Further, the instances are run with different levels of discretization of the states of charge. The rate of discharge, $\alpha$, and the rate of recharge, $\beta$ are chosen such that the ratio $\frac{\alpha}{\beta}$ is equal to two, \textit{i.e.}, the battery discharges twice as fast as the it recharges per unit distance traveled.


In Fig. \ref{fig:pathUB}, a path of the feasible solution is shown for a scenario generated in the `newyork0' map, and the charge profiles of the feasible path and the lower bound are shown in Fig. \ref{fig:chrgprfUB}. \revised{We consider no-fly zones in this scenario, shown in red, and were addressed as explained in Section \ref{sec:prblmform}}. The vertical lines are the positions of the vertices in Fig. \ref{fig:chrgprfUB}, and one may observe the charge profile is not continuous for the lower bound path. This is expected due to the relaxation of the charge continuity, and the resulting solution is a lower bound, rather than a feasible solution. For this scenario, the cost of the feasible path produced by Algorithm \ref{alg:upperbound} is $5318$ and the lower bound is given as $5137$, therefore, the gap between upper bound and lower bound is around $3.5 \%$. This infers that the feasible solution is within less than $3.5 \%$ from the optimal solution.

\begin{figure}
        \centering
        \subfigure[Solution produced by Algorithm \ref{alg:upperbound}]{\includegraphics[width=.95\columnwidth, height=150pt]{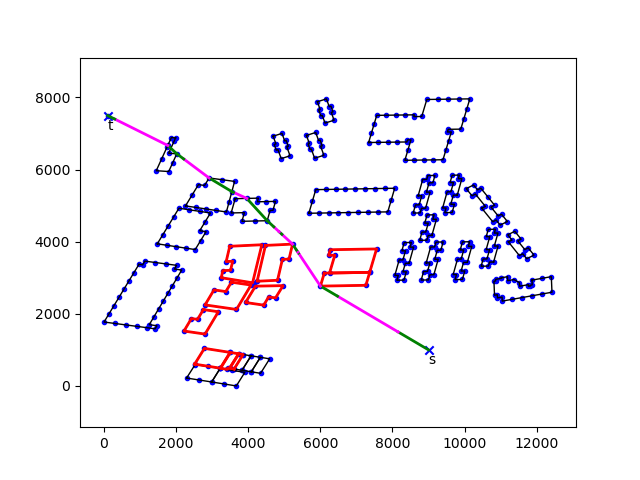} \label{fig:pathUB}} \\
        \vspace{-10pt}
        \subfigure[Charge profile of the path]{\includegraphics[width=.95\columnwidth, height=100pt]{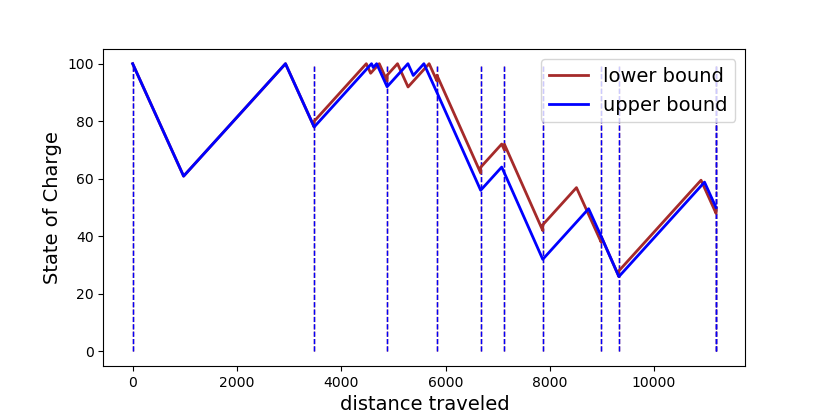} \label{fig:chrgprfUB}} \\
        \caption{Results of a scenario generated in the `newyork0' map with $35$
        \revised{quiet zones}}
        \vspace{-15pt}
\end{figure}

The percent gap between the lower bounds and upper bounds is a measure of the quality of the feasible solutions. This is the maximum gap between the feasible solution and the optimal solution. For each of the maps, there are $50$ scenarios, and each scenario is solved with $20$, $30$ and $40$ discretizations. The box plot of the percentage gap is shown in Fig. \ref{fig:gaps}. Clearly, with higher sampling rate, the algorithm produces better solutions as evident from the reducing gap with higher discretizations. \revised{The maps $Boston$ and $New~York$ have a few quiet zones with very small edges, and because of these, the lower bound graph consists of many zero cost edges. For example, let $[q^a, q^b]$ be the charge interval corresponding to the vertices $v_k$ and $v_l$ in $G_l$. If the charge required to travel on the edge $(v_k, v_l)$ is less then $q^b - q^a$, this edge can be on a path with zero cost, and have same charge at the start and end of the edge. This resulted in loose lower bounds, and hence the higher gap. However, the algorithm produces tighter lower bounds by choosing sufficiently large number of sub-intervals $n_l$ while constructing the graph $G_l$.  But, this comes at a higher cost in computational time required. The computation time required to find the upper bounds and lower bounds are shown as box and whisker plots in Figs. \ref{fig:comptimeUB} and \ref{fig:comptimeLB}. The higher computation times for the $Boston$ and $New~York$ maps is due to the higher number of quiet zones.}

A significant part of the computational effort in Algorithm \ref{alg:upperbound} is spent constructing the graph $G_u(V_u, E_u)$. However, for UAM applications the restricted zones, start and end locations are know \apriori and can be computed ahead of time. For other applications, like package delivery, the restricted zones and the problem parameters are known \apriori, but the positions of the start and goal may not be known. In practice, one may construct the parts of the graph $G_u(V_u, E_u)$ offline without the nodes corresponding to the start and goal position, and edges incident on them. And when the robot's  start and goal positions are specified, the corresponding edges of the graph could be constructed and added to the graph with Algorithm \ref{alg:upperbound}. 
Therefore, to validate the feasibility of online implementation of this algorithm, it is sufficient to analyse the computational effort of the online part. The online computation time required by Algorithm \ref{alg:upperbound} is shown in Fig \ref{fig:comptimeUB}. Though the effort required increases with higher number of restricted zones and higher sampling rate, it is still in the order of seconds, and therefore, is viable for on-board implementation.

\revised{To evaluate the cost savings from the proposed framework, we solved the related but different path planning problem where the quiet zones are considered to be no-fly zones (i.e., feasible paths must completely avoid the quiet zones). This is done by removing the quiet zone edges from the graph $G_u$, and solving for the shortest path thereafter. Note that, we still consider the hybrid mode of the robotic vehicle, and the switching between the gasoline and electric mode still exists. If we were to restrict this to gasoline mode only, the savings would be much larger. The percentage reduction in cost using Algorithm \ref{alg:upperbound} is presented in Fig. \ref{fig:costSvng}. The cost savings are around $5$ to $10$\% for most cases, with some cases having much higher savings. This large variance is due to the randomly generated start and goal locations; the cost reduction depends on the difference in length between the shortest path that avoids the ``no-fly zones" and path generated from our work that passes through the restricted zones.}

\begin{figure}
        \centering
        \vspace{-5pt}
        \subfigure[Average of percent gap between upper and lower bounds]{\includegraphics[width=.85\columnwidth]{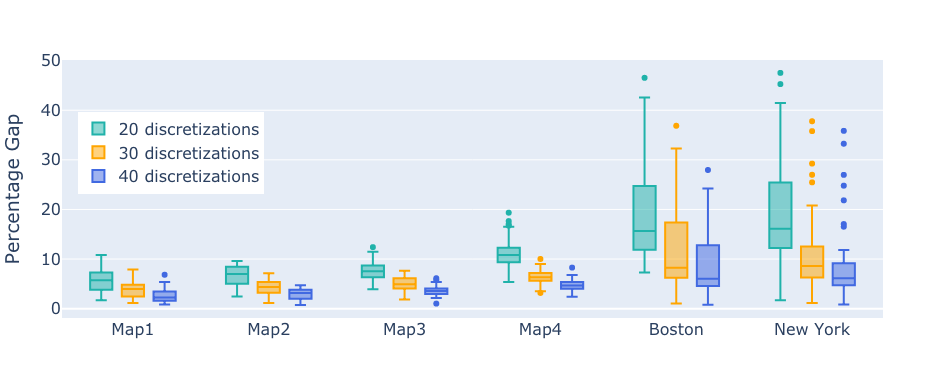} \label{fig:gaps}} \\
        \vspace{-10pt}
        \subfigure[Online computation time required for upper bound]{\includegraphics[width=.85\columnwidth]{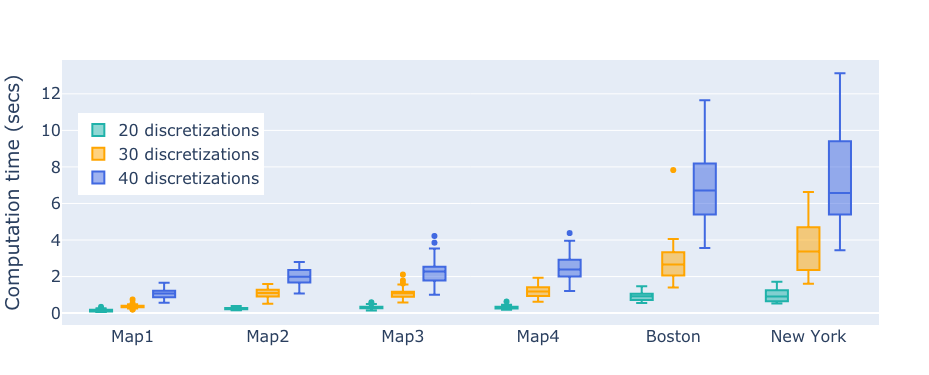} \label{fig:comptimeUB}} \\
        \vspace{-10pt}
        \subfigure[Computation time required for lower bound]{\includegraphics[width=.85\columnwidth, clip, trim={0 0 0 10}]{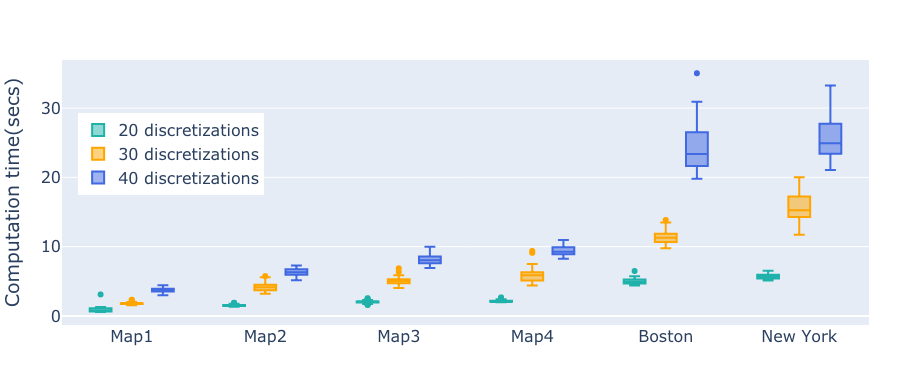} \label{fig:comptimeLB}}
        \caption{Computational results from benchmark and random maps}
\end{figure}

\begin{figure}
        \centering
        \vspace{-10pt}
        \includegraphics[width=.85\columnwidth]{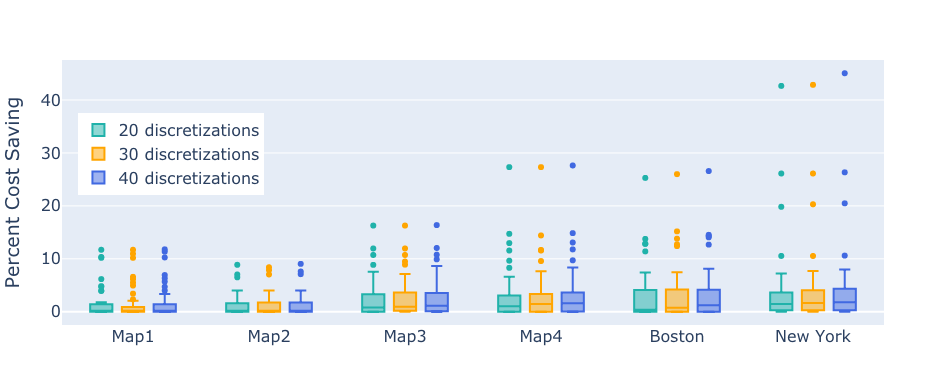} \label{fig:costSvng}
        \vspace{-10pt}
        \caption{Percentage cost reduction compared to path planning that avoids quiet zones}
        \vspace{-15pt}
\end{figure}
\vspace{-10pt}
\section{Conclusions} \label{sec:concl}
A novel hybrid path planning problem that arises from urban air mobility is presented. In this path planning problem, the hybrid vehicle is required to run in electric mode in certain regions to comply with noise restrictions. The path planner needs to generate a path and schedule for switching between gasoline and electric modes.  Algorithms based on sampling and partitioning are presented yielding upper and lower bounds to the coupled path planning and energy management problem. The paper assumes a linear battery model, but a higher fidelity model could be easily integrated with the algorithms presented here. The solutions produced by the presented algorithms are empirically shown to yield upper and lower bounds that are within $15\%$ of one another, indicating that the feasible solutions are of high quality. 


As a future research direction, one may develop an iterative scheme to compute lower bounds that refines the partitioning in each iteration only where it is necessary, and thus overcome the cost of higher sampling. Another direction of future research includes the adaptive sampling of the boundaries of the \revised{quiet zones} that chooses higher number of samples on the boundaries that are more likely to be on the path.

\vspace{-10pt}

\bibliographystyle{IEEEtran}
\bibliography{references}

\end{document}